\definecolor{myred}{rgb}{0.75,0,0}
\definecolor{mygreen}{rgb}{0,0.5,0}
\definecolor{myblue}{rgb}{0,0,0.65}
\theoremstyle{plain}
\newtheorem{theorem}{Theorem}[section]
\newtheorem{lemma}[theorem]{Lemma}
\newtheorem{corollary}[theorem]{Corollary}
\theoremstyle{definition}
\newtheorem{definition}[theorem]{Definition}
\newtheorem{remark}[theorem]{Remark}
\newtheorem{question}[theorem]{Question}
\theoremstyle{remark}
\numberwithin{equation}{section}
\newcommand\nc{\newcommand}
\nc\on{\operatorname}
\nc\renc{\renewcommand}
\newcommand\bc{\mathbb C}
\DeclareMathOperator\xsym{Sym_2}
\newcommand\scc{\mathscr C}
\newcommand\scj{\mathscr J}
\newcommand\sco{\mathscr O}
\newcommand \ra{\rightarrow}
\newcommand \xra{\xrightarrow}
\DeclareMathOperator\spec{\text{Spec}}
\newcommand*{\shom}{\mathscr{H}\kern -.5pt om}
\newcommand*{\stor}{\mathscr{T}\kern -.5pt or}
\newcommand*{\sext}{\mathscr{E}\kern -.5pt xt}
\newcommand \mg{{\mathscr M_g}}
\newcommand \ag{{\mathscr A_g}}
\newcommand{\customlabel}[2]{\protected@write \@auxout {}{\string \newlabel {#1}{{#2}{\thepage}{#2}{#1}{}} }\hypertarget{#1}{#2}}
\DeclareMathOperator\id{id}
\renewcommand\hom{\mathrm{Hom}}
\DeclareMathOperator\pic{Pic}
\DeclareMathOperator\spn{Span}
\DeclareMathOperator\sym{Sym}
\DeclareFontFamily{U}{wncy}{}
\DeclareFontShape{U}{wncy}{m}{n}{<->wncyr10}{}
\DeclareSymbolFont{mcy}{U}{wncy}{m}{n}
\DeclareMathSymbol{\Sha}{\mathord}{mcy}{"58}
\def\listtodoname{List of Todos}
\def\listoftodos{\@starttoc{tdo}\listtodoname}
\title{The infinitesimal Torelli problem}
\author{Aaron Landesman}
\begin{document}

\maketitle
\begin{abstract}
	Let $g \geq 2$ and let the Torelli map denote the map sending a genus $g$ curve to its principally polarized Jacobian.
	We verify the well known fact that the map induced on tangent spaces by the Torelli map is dual to the multiplication map
	$\mathrm{Sym}^2 H^0(C, \omega_C) \rightarrow H^0(C, \omega_C^{\otimes 2})$.
\end{abstract}

\section{Introduction}

Let $\mg$ denote the moduli stack of
curves of genus $g$, and let $\ag$ denote the moduli stack of principally polarized abelian varieties of genus $g$.
Throughout, we assume $g \geq 2$ and work over a fixed algebraically closed field $k$.
Let $\mg \xra{\tau_g} \ag$ denote the Torelli map sending a curve to its principally polarized Jacobian.
Let $[C] \in \mg$ and let $J$ denote the principally polarized Jacobian of $C$.
The main question we would like to address is the following: 
\begin{question}
	\label{question:tangent-map}
	Why is $T_{[C]}\tau_g : T_{[C]}\mg \ra T_{[J]} \ag$,
	the map induced on tangent spaces by the Torelli map,
	dual to the natural multiplication map 
	$\sym^2 H^0(C, \omega_C) \ra H^0(C, \omega_C^{\otimes 2})$?
\end{question}

This is answered in \autoref{theorem:torelli-dual}.
The content of this note is well known to the experts.
The answer to this question has essentially already appeared in \cite[Theorem 2.6]{oortS:local-torelli-problem}.
However, some details which were labeled as obvious 
in the proof of \cite[Theorem 2.6]{oortS:local-torelli-problem} took us some time to verify.
Specifically, \autoref{lemma:commute-tangent-space} and \autoref{theorem:torelli-dual} are stated without justification in \cite[Theorem 2.6]{oortS:local-torelli-problem}.
For this reason, we decided to write this explanation.

An analytic proof (over $\bc$) is also given in \cite[Lemma 3]{griffiths:some-remarks-and-examples-on-continuous-systems-and-moduli}.
We originally learned this statement from \cite{mathoverlow:is-the-torelli-map-an-immersion}.

\subsection{Notation and Overview}
Let $C$ be a smooth proper connected curve over an algebraically closed field $k$ and let $J := \pic^0_{C/k}$
denote its Jacobian.
For any $x \in C(k)$, we have an Abel-Jacobi map
\begin{align*}
	j_x: C & \rightarrow J \\
	y & \mapsto \sco_C(y-x),
\end{align*}
which is a closed immersion.
We therefore obtain an induced map $T_C \ra j_x^* T_J$, where $T_X$ the tangent sheaf of $X$.
Hence, we obtain a map 
\begin{align}
	\label{eq:tangent-map}
	dj_x: H^1(C, T_C) \ra H^1(C, j_x^* T_J).
\end{align}

The plan for the remainder of this note is as follows:
First, we relate $dj_x$ to the map induced on tangent spaces by the Torelli map $T_{[C]} \mg \ra T_{[J]} \ag$ in \autoref{lemma:commute-tangent-space}.
Then, in \autoref{section:ag-tangent}, we recall relevant identifications of the Torelli map on tangent spaces to
essentially answer \autoref{question:tangent-map}
modulo a description of $dj_x$ as dual to a certain multiplication map of sections.
This description is proven in \autoref{theorem:torelli-dual}.

\subsection{Acknowledgements}
Many thanks to Bogdan Zavyalov for listening carefully to the arguments in this note.

\section{Identifying the map on tangent space with $dj_x$}

Note that there are natural Kodaira Spencer maps $T_{[C]} \mg \ra H^1(C, T_C)$ and $T_{[J]} \ag \ra H^1(J, T_J)$ which sends a deformation to its corresponding cohomology class. 
(The Kodaira Spencer map is explicitly described in terms of a certain boundary map in cohomology, see for example \cite[p. 168]{oortS:local-torelli-problem}.)
From these, we obtain a diagram
\begin{equation}
	\label{equation:ks-tangent-space}
	\begin{tikzcd} 
		T_{[C]} \mg \ar {rr} \ar {d} && T_{[J]} \ag \ar {d} \\
		H^1(C, T_C) \ar {rd}{dj_x} && H^1(J, T_J) \ar{ld} \\
		\qquad & H^1(C, j_x^* T_J).
\end{tikzcd}\end{equation}
It is stated in \cite[p. 169]{oortS:local-torelli-problem} that \eqref{equation:ks-tangent-space} commutes.
\begin{lemma}
	\label{lemma:commute-tangent-space}
	The diagram \eqref{equation:ks-tangent-space} commutes.
\end{lemma}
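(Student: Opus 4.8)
The plan is to deduce the commutativity from the naturality of the connecting homomorphism in cohomology, applied to a morphism of Kodaira--Spencer short exact sequences induced by a relative Abel--Jacobi map. First I would fix a tangent vector at $[C]$, represented by a first-order deformation $\pi \colon \mathcal{C} \to S$ with $S = \spec k[\epsilon]/(\epsilon^2)$ and central fibre $C$. Since $C$ is smooth and the closed point of $S$ is defined by a square-zero ideal, the infinitesimal lifting criterion lets me lift the basepoint $x \in C(k)$ to a section $\sigma \colon S \to \mathcal{C}$ of $\pi$. The relative Jacobian $\mathcal{J} := \pic^0_{\mathcal{C}/S}$ is then representable (the section $\sigma$ provides a rigidification), is a deformation of $J$ over $S$, and by the definition of $\tau_g$ on families it represents the image of our deformation under $T_{[C]}\tau_g$; its Kodaira--Spencer class in $H^1(J, T_J)$ is the across-then-down image in \eqref{equation:ks-tangent-space}. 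Moreover $\sigma$ determines a relative Abel--Jacobi map $j \colon \mathcal{C} \to \mathcal{J}$ over $S$ whose restriction to the central fibre is exactly $j_x$.

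The heart of the argument is to compare two Kodaira--Spencer sequences through $j$. For any smooth family $f \colon \mathcal{X} \to S$ with central fibre $X$, restricting $0 \to T_{\mathcal{X}/S} \to T_{\mathcal{X}} \to f^* T_S \to 0$ to $X$ gives $0 \to T_X \to T_{\mathcal{X}}|_X \to \sco_X \to 0$, and the Kodaira--Spencer class of $\mathcal{X}/S$ is the image of $1 \in H^0(X, \sco_X)$ under the connecting map $\delta$. Applying this to $\mathcal{C}$, and pulling back along $j_x$ the corresponding sequence for $\mathcal{J}$, the morphism $j$ induces a commutative ladder of short exact sequences on $C$:
\begin{equation*}
\begin{CD}
0 @>>> T_C @>>> T_{\mathcal{C}}|_C @>>> \sco_C @>>> 0 \\
@. @V{dj_x}VV @VVV @| @. \\
0 @>>> j_x^* T_J @>>> j_x^*\bigl(T_{\mathcal{J}}|_J\bigr) @>>> \sco_C @>>> 0,
\end{CD}
\end{equation*}
in which the left vertical arrow is $dj_x$ and the rightmost nontrivial vertical arrow is the identity. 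Here I use that pulling the lower sequence back along the closed immersion $j_x$ preserves exactness, since all three terms are locally free and the sequence is therefore locally split, together with the base-change identity $j^* T_{\mathcal{J}}|_C = j_x^*(T_{\mathcal{J}}|_J)$ coming from $j|_C = j_x$.

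Finally, naturality of $\delta$ for this ladder yields $dj_x(\delta(1)) = \delta'(1)$, where $\delta, \delta'$ are the connecting maps of the top and bottom rows. The left-hand side is $dj_x$ applied to the Kodaira--Spencer class of $\mathcal{C}/S$, namely the down-then-$dj_x$ composite of \eqref{equation:ks-tangent-space}. Since connecting maps also commute with pullback of a locally split short exact sequence, $\delta'(1)$ equals $j_x^*$ applied to the Kodaira--Spencer class of $\mathcal{J}/S$, which is the across-then-down composite. The two composites therefore agree and the diagram commutes. I expect the main obstacle to be the precise construction of the relative Abel--Jacobi map $j$ over $S$ and the verification that it restricts to $j_x$ and produces the displayed ladder with $dj_x$ in the correct slot; once this functoriality is in place, commutativity is a formal consequence of the naturality of the connecting homomorphism.
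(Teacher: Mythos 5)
Your argument is correct, but it takes a genuinely different route from the paper's. Both proofs share the same geometric setup: represent the tangent vector by a first-order deformation over $\spec k[\epsilon]/(\epsilon^2)$, lift the basepoint $x$ by infinitesimal smoothness, and extend $j_x$ to a relative Abel--Jacobi map into the relative Jacobian. From there the paper passes to an explicit \v{C}ech computation: it lifts the two Kodaira--Spencer classes to cocycles $u_{ij}$ and $v_{ij}$, reinterprets them as automorphisms of $U_{ij}\times D$ and $V_{ij}\times D$ over the dual numbers fitting into a fibre square, and checks fibrewise at each $y\in C$ that the two images in $T_{j_x(y)}(J)$ coincide, using functoriality of the tangent/cotangent pairing. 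You instead use the intrinsic description of the Kodaira--Spencer class as the image of $1$ under the connecting map of the restricted tangent sequence, assemble the ladder of short exact sequences induced by the relative Abel--Jacobi map, and conclude by naturality of the connecting homomorphism (once for the map of sequences on $C$, once for pullback of the locally split sequence along $j_x$). Your version is shorter and makes transparent that the statement is a formal consequence of functoriality; it front-loads the work into the boundary-map description of the Kodaira--Spencer class, which the paper in any case cites. The paper's cocycle computation is more elementary and self-contained, at the cost of more bookkeeping. One small point of care in your write-up: over the non-reduced base $S$ the sheaf $f^*T_S$ requires interpretation, and it is cleaner to obtain your two rows by dualizing the conormal sequence $0 \to \sco_X \to \Omega^1_{\mathcal{X}}|_X \to \Omega^1_X \to 0$ of the closed fibre (the standard source of the boundary-map description); this does not affect the validity of your argument.
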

\begin{proof}
	Start with a deformation $\scc$ over $D = \spec k[\varepsilon]/(\varepsilon^2)$ corresponding to an element $[\scc] \in T_{[C]} \mg$.
	Let $\scj$ denote $\pic^0_{\scc/D}$, the corresponding deformation of $J$ under the Torelli map.
	Then, $\scj$ corresponds to a class $[\scj] \in H^1(J, T_J)$ via the Kodaira Spencer map.
	Now, by smoothness of $C$, $x$ lifts to a point $\widetilde{x} \in \scc(D)$. 
	Then, the Abel-Jacobi map $j_x: C \ra J$ lifts to a map $j_{\widetilde{x}}: \scc \ra \scj$ which is again a closed immersion. 
	
	We would like to show that $[\scc]$ and $[\scj]$ map to the same element of $H^1(C, j_x^* T_J)$.
We now verify this by working with explicit cocycles.
To this end, choose a cover $U_i$ for $J$ and let $V_i := j_x^{-1}(U_i)$, so that we obtain a resulting cover $V_i$ for $C$.
Let $U_{ij} := U_i \cap U_j$ and $V_{ij} := V_i \cap V_j$.
We can lift $[\scj]$ to a cocycle $u_{ij} \in H^0(U_{ij}, T_J|_{U_{ij}})$.
Similarly, $[\scc]$ corresponds to a cocycle $v_{ij} \in H^0(V_{ij}, T_C|_{V_{ij}})$.
Under the correspondence between sections of the tangent sheaf, derivations, and automorphisms of $U_{ij} \times D$ restricting to the identity on the closed fiber,
$u_{ij}$ corresponds to an automorphism $(1 + u_{ij}) =: \phi_{ij}: U_{ij} \times D \ra U_{ij} \times D$ and $v_{ij}$ corresponds to the an automorphism
$(1+v_{ij}) =: \psi_{ij}: V_{ij} \times D \ra V_{ij} \times D$
making
\begin{equation}
	\label{equation:v-to-u-deformation}
	\begin{tikzcd} 
		V_{ij} \times D \ar {r} \ar {d}{\psi_{ij}} & U_{ij} \times D \ar {d}{\phi_{ij}} \\
		V_{ij} \times D \ar {r} & U_{ij} \times D
\end{tikzcd}\end{equation}
a fiber square.

It suffices to show that the image of $u_{ij}$ under $f: H^0(U_{ij}, T_J|_{U_{ij}}) \ra H^0(V_{ij}, j_x^* T_J|_{V_{ij}})$ agrees with the image of
$v_{ij}$ under the map $h: H^0(V_{ij}, T_C|_{V_{ij}})\ra H^0(V_{ij}, T_J|_{V_{ij}})$.
We can restrict both these maps to any given point $y \in C$.
Letting $h_y: T_y(C) \ra T_{j_x(y)} (J)$ denote the restriction of $h$ to $y$, it suffices to check $h_y(v_{ij}|_y) = u_{ij}|_{j_x(y)}$, where both are considered as elements of
the $g$-dimensional vector space
$T_{j_x(y)}(J)$.
This is now a concrete calculation over the an infinitesimal neighborhood of the single point $y$. 

Applying the functor $\hom_D(D, \bullet)$ to \eqref{equation:v-to-u-deformation} and viewing the outputs as $k[\varepsilon]/(\varepsilon^2)$ modules,
we obtain the following commutative diagram
\begin{equation}
	\label{equation:v-to-u-at-y}
	\begin{tikzcd} 
		k[\varepsilon]/(\varepsilon^2) \ar {r}{h_y'}\ar{d}{\psi_{ij}|_y} & k[\varepsilon]/(\varepsilon^2) \otimes H^0(j_x^* T_J) \ar{d}{\phi_{ij}|_y} \\
		k[\varepsilon]/(\varepsilon^2) \ar {r}{h_y'}  & k[\varepsilon]/(\varepsilon^2) \otimes H^0(j_x^* T_J),
\end{tikzcd}\end{equation}
where $h_y'$ is identified with $h_y$ by $h_y'(1+\varepsilon v) = 1 + \varepsilon h_y(v)$. 
This identification follows from the correspondence between derivations and automorphisms over the dual numbers
restricting to the identity on the special fiber.

Let $\omega_1, \ldots, \omega_g$ be a basis for $T_J|_e$.
By functoriality of the pairing between the tangent space and cotangent space applied to $j_x: C \ra J$,
we see that for $v \in T_y(C)$ and $\omega \in T{j_x(y)}(J)^\vee$, we have $\langle v, j^*(\omega) \rangle_{T_y(C)} = \langle dj_x(v), \omega \rangle_{T_{j_x(y)}(J)}$.
In what follows, we use $\omega|_y(v)$ as shorthand for $\langle v, j^*(\omega) \rangle_{T_y(C)}$.
Via the compatibility of tangent space pairings mentioned above, we find
the map $h_y'$ is given by sending 
\begin{align*}
1 + \varepsilon v \mapsto \left( 1 + \varepsilon \omega_1|_y(v), \ldots, 1 + \varepsilon \omega_g|_y(v) \right).
\end{align*}
Therefore, if the map $\psi_{ij}|_y$ is given by $\times (1 + \varepsilon a)$, we see that commutativity of \eqref{equation:v-to-u-at-y} forces $\phi_{ij}|_y$ to be given by
$\left( \times (1 + \varepsilon \omega_1|_y(a) ), \ldots, \times (1 +\varepsilon \omega_g|_y(a))\right)$.
This tells us that $u_{ij}|_{j_x(y)}$ is given by $\left( \omega_1|_y(v_{ij}|_y), \ldots, \omega_g|_y(v_{ij}|_y) \right) = h_y(v_{ij}|_y)$,
as we wanted to show.
\end{proof}

\section{Identifying the tangent space to $\ag$}
\label{section:ag-tangent}

In this subsection, we explain some relevant background from \cite{oortS:local-torelli-problem} in order to identify the tangent spaces of $\ag$ and $\mg$.
Before continuing, let us make explicit our notational conventions for symmetric powers of vector spaces.
\begin{definition}
	\label{definition:}
	For $V$ a vector space, define $\xsym V$ as the kernel of $V \ra \wedge^2 V$, where $\wedge^2 V = V/ \spn(v\otimes v : v \in V)$.
Note that $\xsym V \simeq \sym^2 V$ in characteristic $p \neq 2$, but differs in characteristic $2$. Here $\sym^2 V$ denotes the natural quotient of $V \otimes V$ by the span of $v \otimes w - w \otimes v$ for $v, w \in V$.
\end{definition}

In this subsection, we explain how to identify the map $T_{[C]} \mg \ra T_{[J]} \ag$ with a map $H^1(C, T_C) \ra \xsym H^1(J, \sco_J)$ which is dual to a map
\begin{align*}
\sym^2 H^0(C, \omega_C) \ra H^0(C, \omega_C^{\otimes 2}).
\end{align*}
This map turns out to be the multiplication map, and in order to check this, 
(essentially using 
\cite[Theorem 2.6]{oortS:local-torelli-problem})
we explain why it suffices to show $dj_x$ is dual to the multiplication map
$H^0(C, \omega_C)^{\otimes 2} \otimes H^0(C, \omega_C) \ra H^0(C, \omega_C^{\otimes 2})$.

Having justified commutativity of \eqref{equation:ks-tangent-space},
recall that the tangent space to $\mg$ at $C$ is identified with $H^1(C, T_C)$.
Recall that the principal polarization induces an isomorphism $\widehat{J} \simeq J$, where $\widehat{J}$ denotes the dual abelian variety, and hence
an isomorphism on tangent spaces $H^1(J, \sco_J) \simeq H^0(J, T_J)$.
Using this isomorphism, the tangent space
to $\ag$ at $J$ is identified with 
\begin{align*}
\xsym H^1(J, \sco_J) \subset H^1(J, \sco_J) \otimes H^1(J, \sco_J) \simeq H^1(J, \sco_J) \otimes H^0(J, T_J) \simeq H^1(J, T_J).
\end{align*}
This identification is implicit in the proof of \cite[Theorem 2.6]{oortS:local-torelli-problem}, but also see \cite[Theorem 3.3.11(iii)]{sernesi:deformations-of-algebraic-schemes}
for much of the relevant deformation theory.

Under the above identifications, 
we saw $T_{[J]} \ag \ra H^1(J, T_J)$ is identified with a map
\begin{align*}
	\xsym H^1(J, \sco_J) \ra H^1(J, T_J) \simeq H^1(J, \sco_J) \otimes H^0(J,T_J).
\end{align*}
This in turn can be identified with a map
\begin{align*}
\xsym H^0(C, \omega_C)^\vee \ra H^0(C, \omega_C)^\vee \otimes H^0(C, \omega_C)^\vee. 
\end{align*}
These identifications are given by 
\begin{align*}
H^1(J, \sco_J) \simeq H^1(C, \sco_C) \simeq H^0(C, \omega_C)^\vee
\end{align*}
via
pullback and Serre duality, and 
\begin{align*}
H^0(J, T_J) \simeq H^0(J, \Omega^1_J)^\vee \simeq H^0(C, \Omega^1_C)^\vee \simeq H^0(C, \omega_C)^\vee
\end{align*}
via the tangent space pairing, pullback and the isomorphism
$\Omega^1_C \simeq \omega_C$.
\begin{corollary}
	\label{corollary:sym-to-tensor}
Under the above identifications, in order to identify the map on tangent spaces $T_{[C]} \mg \ra T_C \ag$ as dual to the natural multiplication map
on differentials $\sym^2 H^0(C, \omega_C) \ra H^0(C, \omega_C^{\otimes 2})$,
it suffices to identify the map
$dj_x$ as dual to the multiplication map
$H^0(C, \omega_C) \otimes H^0(C, \omega_C) \ra H^0(C, \omega_C^{\otimes 2})$.
\end{corollary}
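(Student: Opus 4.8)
The plan is to combine the commutativity of \eqref{equation:ks-tangent-space} from \autoref{lemma:commute-tangent-space} with an elementary duality, so that the statement about the full tangent map $T_{[C]} \mg \ra T_C \ag$ follows formally once the corresponding statement for $dj_x$ is granted. First I would unwind the identifications recorded above to describe the composite $T_{[J]} \ag \ra H^1(J, T_J) \ra H^1(C, j_x^* T_J)$ given by the right column and right diagonal of \eqref{equation:ks-tangent-space}. Since $T_J$ is trivial, $j_x^* T_J \simeq \sco_C \otimes_k H^0(J, T_J)$, so $H^1(C, j_x^* T_J) \simeq H^1(C, \sco_C) \otimes_k H^0(J, T_J)$, which under the stated identifications becomes $H^0(C, \omega_C)^\vee \otimes H^0(C, \omega_C)^\vee$. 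Moreover the restriction map $H^1(J, T_J) \ra H^1(C, j_x^* T_J)$ acts as $j_x^* \otimes \id$, namely as the pullback isomorphism $H^1(J, \sco_J) \simeq H^1(C, \sco_C)$ on the first tensor factor and the identity on the second. Hence, under all the identifications, this composite is exactly the tautological inclusion
\[
	\iota : \xsym H^0(C, \omega_C)^\vee \hookrightarrow H^0(C, \omega_C)^\vee \otimes H^0(C, \omega_C)^\vee .
\]
Commutativity of \eqref{equation:ks-tangent-space}, together with the Kodaira--Spencer identification $T_{[C]} \mg \simeq H^1(C, T_C)$, then says precisely that $\iota \circ \tau = dj_x$, where $\tau : T_{[C]} \mg \ra T_C \ag \simeq \xsym H^0(C, \omega_C)^\vee$ denotes the tangent map of the Torelli map.

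Next I would dualize. By the definition of $\xsym$, the inclusion $\iota$ is the kernel of $H^0(C, \omega_C)^\vee \otimes H^0(C, \omega_C)^\vee \ra \wedge^2 H^0(C, \omega_C)^\vee$, and a direct computation (valid in every characteristic, this being the reason for using $\xsym$ rather than $\sym^2$ in characteristic $2$) identifies $\xsym(W^\vee)$ with $(\sym^2 W)^\vee$ for any finite dimensional $W$, carrying $\iota$ to the dual of the natural quotient. Consequently $\iota^\vee$ is the canonical quotient $q : H^0(C, \omega_C) \otimes H^0(C, \omega_C) \twoheadrightarrow \sym^2 H^0(C, \omega_C)$. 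Dualizing the relation $\iota \circ \tau = dj_x$ and using Serre duality together with $\Omega^1_C \simeq \omega_C$ to write $H^1(C, T_C)^\vee \simeq H^0(C, \omega_C^{\otimes 2})$, I obtain
\[
	\tau^\vee \circ q = (dj_x)^\vee : H^0(C, \omega_C) \otimes H^0(C, \omega_C) \ra H^0(C, \omega_C^{\otimes 2}).
\]

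Finally, under the hypothesis of the corollary, namely that $dj_x$ is dual to the multiplication map $m : H^0(C, \omega_C) \otimes H^0(C, \omega_C) \ra H^0(C, \omega_C^{\otimes 2})$, we have $(dj_x)^\vee = m$, so the displayed relation reads $\tau^\vee \circ q = m$. Because the multiplication of global sections is symmetric, $m$ factors as $m = \ol m \circ q$ through the quotient $q$, where $\ol m : \sym^2 H^0(C, \omega_C) \ra H^0(C, \omega_C^{\otimes 2})$ is the induced multiplication map. Since $q$ is surjective this forces $\tau^\vee = \ol m$; equivalently $\tau$ is dual to $\ol m$, which is exactly the assertion that the tangent map $T_{[C]} \mg \ra T_C \ag$ is dual to the multiplication map $\sym^2 H^0(C, \omega_C) \ra H^0(C, \omega_C^{\otimes 2})$.

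The main obstacle is the bookkeeping in the first step: one must check that the chain of isomorphisms relating $H^1(J, T_J)$, $H^1(J, \sco_J)$, and $H^0(J, T_J)$ to spaces attached to $C$ really transports the restriction map $H^1(J, T_J) \ra H^1(C, j_x^* T_J)$ to the tautological inclusion $\iota$, and in particular that the two tensor factors are matched consistently rather than interchanged, so that the symmetric subspace $\xsym H^0(C, \omega_C)^\vee$ lands as its image. Once this compatibility of the identifications is pinned down, the remaining duality argument is purely formal.
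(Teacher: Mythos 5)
Your argument is correct and follows essentially the same route as the paper: combine \autoref{lemma:commute-tangent-space} with the fact that $T_{[J]}\ag \ra H^1(C, j_x^*T_J)$ is dual to the projection $H^0(C,\omega_C)^{\otimes 2} \twoheadrightarrow \sym^2 H^0(C,\omega_C)$, then conclude by the formal observation that the multiplication map factors through this surjection. The only difference is that the paper obtains that middle fact by citing \cite[Theorem 2.6]{oortS:local-torelli-problem} rather than re-deriving it from the triviality of $T_J$ as you sketch, and your explicit check that $\xsym(W^\vee) \simeq (\sym^2 W)^\vee$ in all characteristics is a worthwhile addition left implicit in the paper.
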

\begin{proof}
	Under the above identifications, 
	it is shown in \cite[Theorem 2.6]{oortS:local-torelli-problem} that
the map 
$T_{[J]} \ag \ra H^1(J, T_J) \simeq H^1(C, j_x^* T_J)$
is dual to the natural projection map
$H^0(C, \omega_C) \otimes H^0(C, \omega_C) \ra \sym^2 H^0(C, \omega_C)$.
Hence, using 
\autoref{lemma:commute-tangent-space},
in order to identify the map on tangent spaces $T_{[C]} \mg \ra T_C \ag$ as dual to the natural map
$\sym^2 H^0(C, \omega_C) \ra H^0(C, \omega_C^{\otimes 2})$
it suffices to identify the map
$dj_x$ as dual to the map
$H^0(C, \omega_C) \otimes H^0(C, \omega_C) \ra H^0(C, \omega_C^{\otimes 2})$,
as we will do below in \autoref{theorem:torelli-dual}.
\end{proof}

\section{Identifying the dual of $dj_x$}
The goal for the remainder of this note is to identify the Serre dual of $dj_x$ as a natural multiplication map.
By Serre duality, we have a natural identification
\begin{align*}
	H^1(C, T_C) \simeq H^0(C, \Omega^1_C \otimes \omega_C)^\vee.
\end{align*}
\begin{remark}
	\label{remark:}
	Note in the above, we are using $\Omega^1_C$ to denote the sheaf of differentials and $\omega_C$
to denote the dualizing sheaf. Of course, they are isomorphic, but we believe it will help clarify things
later to call them by different names.
\end{remark}

Further, we have a duality inducing an isomorphism 
\begin{align*}
H^1(C, j_x^* T_J) \simeq \left( H^0(C, \Omega^1_C) \otimes H^0(C, \omega_C)\right)^\vee
\end{align*}
given by the pairing
\begin{equation}
\begin{aligned}
	&H^1(C, j_x^* T_J) \otimes H^0(C, \Omega^1_C) \otimes H^0(C, \omega_C) \\
	&\xra{f_1}
	H^1(C, \sco_C \otimes H^1(C, \sco_C) ) \otimes H^0(C, \Omega^1_C) \otimes H^0(C, \omega_C) \\
	&\xra{f_2}
	H^1(C, \sco_C) \otimes H^1(C, \sco_C) \otimes H^0(C, \Omega^1_C) \otimes H^0(C, \omega_C) \\
	&\xra{f_3}
	H^1(C, \sco_C) \otimes H^1(C, \omega_C) \\
	&\xra{f_4}
	k
\end{aligned}
	\label{eq:jacobian:duality}
\end{equation}
where $f_3$ is given by $a \otimes b \otimes c \otimes d \mapsto b(c) \cdot a \otimes d$, 
via the natural pairing between the tangent space to $J$, identified with $H^1(C, \sco_C)$, and
the cotangent space to $J$, identified with $H^0(C, \Omega^1_C) \simeq H^0(J, \Omega^1_J)$.
Finally, the map $f_4$ is given by Serre duality.
\begin{remark}
	\label{remark:}
	We note that the composition $f_4 \circ f_3 \circ f_2 \circ f_1$ is identified with the Serre duality pairing
	$H^1(C, j_x^*T_J) \otimes H^0(C, \omega_C \otimes j_x^* \Omega^1_J) \ra k$ under the identification
	\begin{align*}
	H^0(C, \omega_C \otimes j_x^* \Omega^1_J) &\simeq H^0(C, \omega_C \otimes H^0(J, \Omega^1_J)) \\
	&\simeq H^0(C, \omega_C \otimes H^0(C, \Omega^1_C)) \\
	&\simeq H^0(C, \omega_C )\otimes H^0(C, \Omega^1_C).
	\end{align*}
\end{remark}

The main result of this note is the following:
\begin{theorem}
	\label{theorem:torelli-dual}
	Using the notation and identifications above, the map $dj_x$ of \eqref{eq:tangent-map} is dual to the multiplication map $\mu: H^0(C, \Omega^1_C) \otimes H^0(C, \omega_C) \ra H^0(C, \Omega^1_C \otimes \omega_C)$.
	Further,
	under the identification of \eqref{equation:ks-tangent-space}, $dj_x$ determines the the map $T_{[C]}\mg \ra T_{[J]}\ag$
	which can be identified as dual to the multiplication map $\sym^2 H^0(C, \omega_C) \ra H^0(C, \omega_C^{\otimes 2})$, via the identifications of \autoref{section:ag-tangent}.
\end{theorem}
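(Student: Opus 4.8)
The plan is to unwind the Serre-duality pairing in \eqref{eq:jacobian:duality} and show that pairing $dj_x(\xi)$ against a decomposable tensor $\omega \otimes \eta \in H^0(C,\Omega^1_C)\otimes H^0(C,\omega_C)$ reproduces exactly the Serre-duality pairing of $\xi \in H^1(C,T_C)$ against the product $\mu(\omega\otimes\eta) = \omega\cdot\eta \in H^0(C,\Omega^1_C\otimes\omega_C)$. Once the first sentence of the theorem is established, the second sentence follows immediately by combining it with \autoref{corollary:sym-to-tensor}, so the real content is the duality statement for $dj_x$ itself.

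First I would make the map $dj_x$ explicit at the level of sheaves. The Abel--Jacobi immersion $j_x$ induces $T_C \ra j_x^* T_J$, and since $T_J$ is the trivial bundle $\sco_J \otimes H^0(J,T_J) \simeq \sco_J \otimes H^1(C,\sco_C)$, the pullback $j_x^* T_J$ is canonically $\sco_C \otimes H^1(C,\sco_C)$. Thus $dj_x$ is obtained by applying $H^1(C,-)$ to a map $T_C \ra \sco_C \otimes H^1(C,\sco_C)$; equivalently, this is the composite that sends a tangent vector to how it moves the point in $J$, and dualizes to a map of global sections. The key classical input I would invoke here is that the differential of $j_x$ at a point $y$, paired against the invariant differential $\omega \in H^0(J,\Omega^1_J)$ pulling back to $\omega \in H^0(C,\Omega^1_C)$, is precisely the cotangent pairing recorded in the proof of \autoref{lemma:commute-tangent-space} (the shorthand $\omega|_y(v)$). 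This identifies the sheaf map $T_C \ra \sco_C\otimes H^1(C,\sco_C)$ as the one adjoint to the evaluation $H^0(C,\Omega^1_C) \otimes T_C \ra \sco_C$, $\omega\otimes v \mapsto \omega(v)$.

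With this description in hand I would carry out the pairing computation. Take $\xi \in H^1(C,T_C)$ represented by a \v{C}ech cocycle $\{\xi_{ij}\}$ valued in $T_C$. Its image $dj_x(\xi) \in H^1(C,\sco_C \otimes H^1(C,\sco_C))$ is the cocycle $\{\sum_k \omega_k(\xi_{ij}) \otimes e_k\}$, where $\{\omega_k\}$ and $\{e_k\}$ are dual bases of $H^0(C,\Omega^1_C)$ and $H^1(C,\sco_C)$. Feeding this through $f_2, f_3, f_4$ of \eqref{eq:jacobian:duality} against $\omega \otimes \eta$: the map $f_3$ contracts the $H^1(C,\sco_C)$ factor with $\omega \in H^0(C,\Omega^1_C)$ via the cotangent pairing, which by duality of the bases extracts the coefficient of $\omega$ and leaves the \v{C}ech class $\{\omega(\xi_{ij})\} \in H^1(C,\sco_C)$ tensored with $\eta \in H^0(C,\omega_C)$; then $f_4$ is the Serre pairing $H^1(C,\sco_C)\otimes H^0(C,\omega_C) \ra k$. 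The outcome is the Serre pairing of $\{\omega(\xi_{ij})\}$ with $\eta$, which is exactly the Serre pairing of $\xi$ with $\omega\otimes \eta = \mu(\omega\otimes\eta) \in H^0(C,\Omega^1_C\otimes\omega_C)$, since cup product with $\xi$ followed by multiplication by $\eta$ agrees with multiplication by $\mu(\omega\otimes\eta)$ followed by cup product with $\xi$. This is precisely the assertion that $dj_x$ is dual to $\mu$.

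The main obstacle I anticipate is bookkeeping the chain of canonical isomorphisms $j_x^* T_J \simeq \sco_C \otimes H^1(C,\sco_C)$ and $H^0(J,\Omega^1_J)\simeq H^0(C,\Omega^1_C)$ so that the contraction in $f_3$ genuinely matches the dual-basis pairing, and verifying that no sign or transpose is introduced when passing through Serre duality in $f_4$ versus the Serre duality defining $dj_x^\vee$. In particular I would need to check that the tangent/cotangent pairing on $J$ used in $f_3$ is compatible, under pullback along $j_x$, with the evaluation pairing used to describe the sheaf map $T_C \ra \sco_C \otimes H^1(C,\sco_C)$ — this compatibility is exactly the functoriality statement $\langle v, j_x^*\omega\rangle = \langle dj_x(v), \omega\rangle$ already recorded in \autoref{lemma:commute-tangent-space}, so I expect it to go through, but confirming the absence of a sign discrepancy in the Serre pairing is where the care is needed. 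Everything else is a routine, if lengthy, diagram chase.
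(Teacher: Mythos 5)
Your proposal is correct and follows essentially the same route as the paper's proof: both reduce the second assertion to the first via \autoref{corollary:sym-to-tensor}, then establish that $dj_x$ is dual to $\mu$ by (i) identifying the sheaf map $T_C \ra j_x^*T_J \simeq \sco_C \otimes H^1(C,\sco_C)$ followed by contraction against $H^0(C,\Omega^1_C)$ with the evaluation map $\nu$, using the functoriality of the tangent/cotangent pairing $\langle v, j_x^*\omega\rangle = \langle dj_x(v),\omega\rangle$ along $j_x$, and (ii) invoking associativity of the cup product to match the resulting pairing with the Serre pairing of $\xi$ against $\mu(\omega\otimes\eta)$. Your dual-basis \v{C}ech computation is just an explicit-coordinates rendering of the paper's diagram decomposition \eqref{equation:decomposed-square} and its pointwise check of \eqref{equation:pullback-and-cup-cohomology-sheaf}.
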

\begin{proof}
By \autoref{corollary:sym-to-tensor},
	using \autoref{lemma:commute-tangent-space} and \cite[Theorem 2.6]{oortS:local-torelli-problem},
	it suffices to identify $dj_x$ as dual to $\mu$.
	As mentioned before, \cite[Theorem 2.6]{oortS:local-torelli-problem}
	does state that $dj_x$ is dual to $\mu$, though in the proof it is stated that this is ``obvious from the preceding arguments'' and we did not understand why at first, so
	we explain this for the remainder of the proof.

	Under the pairings given above, we need to verify that for $v \in H^1(T_C), \alpha \in H^0(C, \Omega^1_C),$ and $\beta \in H^0(C, \omega_C)$, we have
	\begin{align*}
		(\alpha \otimes \beta) (dj_x(v)) = \left( \mu(\alpha \otimes \beta) \right)(v)
	\end{align*}
Equivalently, we wish to show
\begin{equation}
	\label{equation:}
	\begin{tikzcd} 
		H^1(C, T_C) \otimes H^0(C, \Omega^1_C) \otimes H^0(C, \omega_C) \ar {r}{\id \otimes \mu} \ar {d}{dj_x \otimes \id \otimes \id} & H^1(C, T_C) \otimes H^0(C, \Omega^1_C \otimes \omega_C) \ar {d} \\
		H^1(C, j_x^* T_J) \otimes H^0(C, \Omega^1_C) \otimes H^0(C, \omega_C) \ar {r} & k
\end{tikzcd}\end{equation}
commutes,
where the bottom map is the composition of 
\eqref{eq:jacobian:duality}
and the right vertical map is the Serre duality pairing.

Note that the above diagram fits into a larger diagram
\begin{equation}
	\label{equation:decomposed-square}
\begin{tikzpicture}[baseline= (a).base]
\node[scale=.80] (a) at (0,0){
	\begin{tikzcd}[column sep = tiny]
		H^1(C, T_C) \otimes H^0(C, \Omega^1_C) \otimes H^0(C, \omega_C) \ar {rr}{\id \otimes \mu} \ar {dd}{dj_x \otimes \id \otimes \id} \ar{rd}{\nu \otimes \id} && H^1(C, T_C) \otimes H^0(C, \Omega^1_C \otimes \omega_C) \ar {dd} \\
		& H^1(C, \sco_C) \otimes H^0(C, \omega_C) \ar{rd}{f_4}& \\
		H^1(C, j_x^* T_J) \otimes H^0(C, \Omega^1_C) \otimes H^0(C, \omega_C) \ar {rr}{f_4 \circ f_3 \circ f_2 \circ f_1} \ar{ur}{f_3 \circ f_2 \circ f_1} & & k
\end{tikzcd}
};
\end{tikzpicture}
\end{equation}
for $\nu: H^1(C, T_C) \otimes H^0(C, \Omega^1_C) \ra H^1(C, T_C \otimes \Omega^1_C) \simeq H^1(C, \sco_C)$
the natural multiplication map
and $f_i$ the maps from \eqref{eq:jacobian:duality}.
We want to show this diagram commutes.

Observe that the upper right hand triangle of \eqref{equation:decomposed-square} commutes
by associativity of the tensor product and cup product: The two maps 
$H^1(C, T_C) \otimes H^0(C, \Omega^1_C) \otimes H^0(C, \omega_C) \ra k$ via passing in different ways around the upper right hand triangle
are given by collapsing the three terms
of the source in different orders, and then applying the trace map from Serre duality.
Further, the bottom triangle of \eqref{equation:decomposed-square} commutes by construction.

Therefore, it suffices to show commutativity of the left hand triangle of \eqref{equation:decomposed-square}.
Since, all maps in the left hand triangle of \eqref{equation:decomposed-square} are the identity on $H^0(C, \omega_C)$, it is in turn equivalent to show commutativity
of
\begin{equation}
	\label{equation:pullback-and-cup-cohomology}
	\begin{tikzcd} 
		H^1(C, T_C) \otimes H^0(C, \Omega^1_C) \ar {r}{dj_x \otimes \id} \ar[bend right]{dddr}{\nu} & H^1(C, j_x^* T_J) \otimes H^0(C, \Omega^1_C) \ar {d}{f_1} \\
	& H^1(C, \sco_C \otimes H^1(C, \sco_C)) \otimes H^0(C, \Omega^1_C)  \ar{d}{f_2}\\
	& H^1(C, \sco_C) \otimes H^1(C, \sco_C) \otimes H^0(C, \Omega^1_C)  \ar{d}{f_3}\\
	& H^1(C, \sco_C). 
\end{tikzcd}\end{equation}
To show commutativity of \eqref{equation:pullback-and-cup-cohomology},
we note that it is in fact the induced diagram obtained by taking the first cohomology
of the following diagram of sheaves on $C$:
\begin{equation}
	\label{equation:pullback-and-cup-cohomology-sheaf}
	\begin{tikzcd} 
		T_C \otimes_{\sco_C} H^0(C, \Omega^1_C) \ar {r}{\delta j_x \otimes \id} \ar{dd}{n_1} & j_x^* T_J \otimes_{\sco_C} H^0(C, \Omega^1_C) \ar {d}{m_1} \\
		& \sco_C \otimes H^1(C, \sco_C) \otimes H^0(C, \Omega^1_C)  \ar{d}{m_2}\\
		T_C \otimes \Omega^1_C \ar{r}{n_2} &  \sco_C. 
\end{tikzcd}\end{equation}
Since $C$ is reduced, to check these two maps of sheaves
$T_C \otimes_{\sco_C} H^0(C, \Omega^1_C) \ra \sco_C$ agree, we can check they agree over each point $y \in C$.

We now finish the proof by checking commutativity of \eqref{equation:pullback-and-cup-cohomology-sheaf} over $y \in C$.
Let $v \in T_y(C) := T_C|_y$ and let $\omega \in H^1(J, T_J)$.
Under the map $\delta j_x : T_C \ra j_x^* T_J$, an element $v \in T_y(C)$ is sent to $\delta j_x(v)$, which can be viewed as the functional $\omega \mapsto \langle dj_x(v), \omega|_{j_x(y)} \rangle$.
Therefore, $(m_2 \circ m_1 \circ (\delta j_x \otimes \id)) (v \otimes j_x^*(\omega)) = \langle \delta j_x(v), \omega|_{j_x(y)} \rangle$.
On the other hand, $n_2 \circ n_1 (v \otimes j_x^*(\omega)) = \langle v, j_x^*(\omega)|_y \rangle$.
Commutativity of the diagram then follows because
\begin{align*}
\langle v, j_x^*(\omega)|_y \rangle_{T_y(C)} =  \langle \delta j_x(v),\omega|_{j_x(y)} \rangle_{T_{j_x(y)}(J)}
\end{align*}
as the pairing between the tangent and cotangent spaces is functorial along the map $C \ra J$.
\end{proof}

\bibliographystyle{alpha}
\bibliography{/home/aaron/Dropbox/master}

\end{document}